\newcommand{\blind}{0}
\newtheorem{theorem}{Theorem}[section]
\newtheorem{lemma}{Lemma}[section]
\newtheorem{proposition}{Proposition}[section]
\newtheorem{corollary}{Corollary}[section]
\theoremstyle{remark}
\newtheorem{example}{\bf Example}[section]
\renewenvironment{proof}{\noindent{\it Proof.}}{\qed}
\def\d{\mbox{d}}
\def\half{\hbox{$1\over2$}}
\DeclareMathOperator*{\argmin}{argmin}
\newcommand{\RR}{\mathbb{R}}
\newcommand{\XX}{\mathbb{X}}
\begin{document}

\def\spacingset#1{\renewcommand{\baselinestretch}%
{#1}\small\normalsize} \spacingset{1}

%%%%%%%%%%%%%%%%%%%%%%%%%%%%%%%%%%%%%%%%%%%%%%%%%%%%%%%%%%%%%%%%%%%%%%%%%%%%%%

\if0\blind
{
  \title{\bf Applications of an algorithm for solving Fredholm equations of the first kind\thanks{
    Martin and Walker acknowledge NSF support with grants DMS-1611791 and DMS-1612891, respectively.}}
  \author{Minwoo Chae\\
    Department of Mathematics, Applied Mathematics and Statistics \\ Case Western Reserve University \vspace{.2cm} \\
    Ryan Martin \\
    Department of Statistics, North Carolina State University
    \vspace{.2cm} \\
    and
    \vspace{.2cm} \\
    Stephen G. Walker \\
    Department of Mathematics, University of Texas at Austin
    }
  \maketitle
} \fi

\if1\blind
{
  \bigskip
  \bigskip
  \bigskip
  \begin{center}
    {\LARGE\bf Title}
\end{center}
  \medskip
} \fi

\bigskip
\begin{abstract}
In this paper we use an iterative algorithm for solving Fredholm equations of the first kind. The basic algorithm is known and is based on an EM algorithm when involved functions are  non--negative and integrable. With this algorithm we demonstrate two examples involving the estimation of a mixing density and a first passage time density function involving Brownian motion.  We also develop the basic algorithm to include functions which are not necessarily non--negative and again present illustrations under this scenario. A self contained proof of convergence of all the algorithms employed is presented.
\end{abstract}

\noindent%
{\it Keywords:}  Brownian motion first passage time; convergence; expectation--maximization; iterative algorithm; mixture model.

\spacingset{1.45}
\section{Introduction}  

An important problem in statistics and applied mathematics is the solution to a so-called {\em Fredholm equation of the first kind}, i.e., given a probability density function $f(x)$ on $\XX \subset \RR$ and a non-negative kernel $k(x,\theta)$ which is a probability density function in $x$ for each $\theta$, find the probability density function $p(\theta)$ such that 
\begin{equation}
\label{fred}
f(x)=\int_\Theta k(x,\theta)\,p(\theta)\,\d\theta.
\end{equation}
Such equations have a wide range of applications across a variety of fields, including signal processing, physics, and statistics.  See \cite{Ramm} for a general introduction, and \cite{Vangel} for statistical applications.  We will also consider problems where the density and non-negativity constraints are relaxed.

Given the importance of this problem, it should be no surprise that there is a substantial literature on the theoretical and computational aspects of solving \eqref{fred}; see, for example, \cite{Corduneau, Groetsch, Hansen, Morozov, Wing}.  Existence of a unique solution to \eqref{fred} is a relevant question, though we will not speak directly on this issue here.  One important case for which there is a well-developed existence theory is when the operator $p \mapsto \int k(\cdot, \theta) \, p(\theta) \,\d\theta$ is compact, a consequence of square-integrability of $k(x,\theta)$ with respect to $\d x \times \d\theta$.  In this case, there exists a singular value system $(\sigma_j, u_j, v_j)$ for which $(u_j(x))$ and $(v_j(\theta))$ are orthogonal bases in $L_2(\XX)$ and $L_2(\Theta)$, respectively, and
$$\int_\Theta k(x,\theta)\,v_j(\theta)\,\d\theta=\sigma_j\,u_j(x).$$
Then $f$ can be expressed as $f(x) = \sum_j \lambda_j u_j(x)$ for some $(\lambda_j)$, and it follows that 
\[ p(\theta) = \sum_j (\lambda_j / \sigma_j) \, v_j(\theta), \]
which exists and is square-integrable if $\sum_j (\lambda_j / \sigma_j)^2 < \infty$. Of course, the above formula for $p$ is not of direct practical value since computing all the eigenvalues and eigenfunctions is not feasible.  Computationally efficient approximations are required.  

The classic iterative algorithm for finding $p$ is given by
\begin{equation}\label{iter}
p_m(\theta)=p_{m-1}(\theta)+\int_\XX k(x,\theta)\,\big(f(x)-f_{m-1}(x)\big)\,\d x, 
\end{equation}
where
$$f_m(x)=\int_\Theta k(x,\theta)\,p_{m}(\theta)\,\d\theta.$$
Convergence properties of (\ref{iter}) are detailed in \cite{Landweber}.  An issue here is that, under the constraint that $p$ and $k(\cdot,\theta)$ are density functions, there is no guarantee that the sequence of approximations $(p_m)$ coming from \eqref{iter} are density functions. 

An alternative is to apply a discretization--based method.  That is, specify grid points $(\theta_j,x_i)$ and approximate the original problem \eqref{fred} via a discrete system
$$f(x_j)=\sum_{i} w_i\,p(\theta_i)\,k(x_j,\theta_i),$$
where the $(w_i)$ are weighting coefficients for a quadrature formula. One then solves the linear system of equations to get an approximation for $p$. See for example, \cite{Phillips}.  Of course, there is no guarantee here either that the solution $p$ will be a density.  

%A good recent review of the state of the art to solving Fredholm equations of the first kind is given in \cite{Groetsch}. 

As an alternative to the additive updates in \eqref{iter}, in this paper we focus on a multiplicative version, described in Section~2, that can guarantee the sequence of approximations are density functions.  Moreover, the theoretical convergence analysis of this multiplicative algorithm turns out to be rather straightforward for the case where there exists a unique solution to equation \eqref{fred}.  For the general case, where a solution may not exist, the asymptotic behavior of the algorithm can still be determined, and, in Section~3, we provide a characterization of this algorithm as an {\em alternating minimization} algorithm, and employ the powerful tools developed in \cite{Csiszar} to study its convergence.  The remainder of the paper focuses on three applications of this algorithm.  The first, in Section~4, is estimating a smooth mixing density based on samples $X_1,\ldots,X_n$ from the density $f$ in \eqref{fred}.  The second, in Section~5, is computing the density of the first passage time for Brownian motion hitting an upper boundary, where the boundary need not be concave.  The third, in Section~6, is solving general Fredholm equations where the density function constraints are relaxed.  Finally, some concluding remarks are given in Section~7.

\section{The algorithm}

Following \cite{Vardi}, an alternative to the algorithms described above for iteratively solving \eqref{fred} is to fix an initial guess $p_0$ and then repeat
\begin{equation}
\label{iter1}
p_m(\theta)=p_{m-1}(\theta)\, \int_\XX \frac{k(x,\theta)\,f(x)}{f_{m-1}(x)}\,\d x, \quad m \geq 1. 
\end{equation}
This algorithm was also studied in \cite{Shyamalkumar}.  First note that, if $f_0$ is well-defined, then it follows immediately from the multiplicative structure of the algorithm, and Fubini's theorem, that $p_m$ is also a density for $m \geq 1$.  This overcomes the difficulty faced with using \eqref{iter}.  Second, for a comparison with \eqref{iter}, we see that \eqref{iter1} operates in roughly the same way, but on the log--scale:
\begin{equation}
\label{iter1.log}
\log p_m(\theta)=\log p_{m-1}(\theta)+\log \int_\XX \frac{k(x,\theta)\,f(x)}{f_{m-1}(x)}\,\d x.
\end{equation}
This logarithmic version will be helpful when we discuss convergence in Section~3.

The sequence \eqref{iter1} also has a Bayesian interpretation.
%; and see \cite{Bayes}, for example,  for details on Bayesian theory. 
Indeed, if, at iteration $m$, we had seen an observation $X$ from the distribution with density $f$, then the Bayes update of ``prior'' $p_{m-1}(\cdot)$ to ``posterior'' $p_{m}(\cdot \mid X)$ would be given by
$$p_m(\theta \mid X)=p_{m-1}(\theta)\,\frac{k(X,\theta)}{f_{m-1}(X)}.$$
But without such an $X$, yet knowing $X$ comes from $f$, the natural choice now is to use the ``average update''
$$p_m(\theta)=\int_\XX p_m(\theta \mid x)\,f(x)\,\d x,$$
which is exactly \eqref{iter1}.  

For a quick proof--of--concept, consider a Pareto density $f(x) = a(x+1)^{-(a+1)}$, supported on $(0,\infty)$, with $a > 0$.  It is easy to check that $f$ is a gamma mixture of exponentials, i.e., $f(x) = \int k(x, \theta) p(\theta) \, \d \theta$, where $k(x,\theta) = \theta \,e^{-\theta x}$ is an exponential density and $p(\theta) = \Gamma(a)^{-1} \theta^{a-1} e^{-\theta}$ is a gamma density.  We can apply algorithm \eqref{iter1} to solve the above equation for $p$. Figure~\ref{fig:pareto} shows a plot of the estimate from \eqref{iter1}, with $a=5$, based on 200 iterations and $p_0$ a half-Cauchy starting density, along with the true gamma density $p(x)$.  Clearly, the approximation is quite accurate over most of the support.

\begin{figure}
\begin{center}
\scalebox{0.6}{\includegraphics{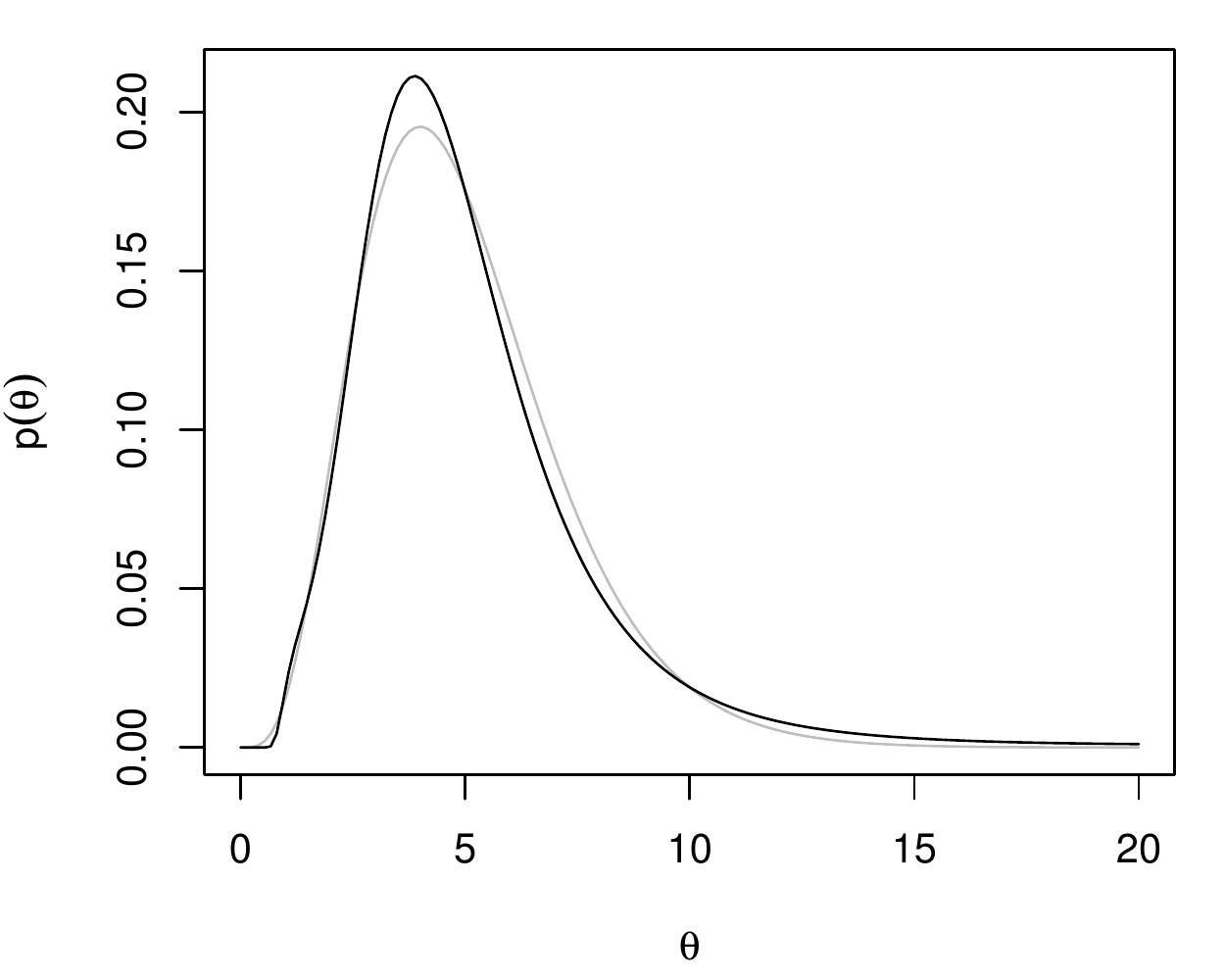}}
\caption{Plot of the mixing density $p_{200}$ (black) from \eqref{iter1} and the true gamma density (gray).}
\label{fig:pareto}
\end{center}
\end{figure}

Before formally addressing the convergence properties of \eqref{iter1}, it will help to provide some intuition as to why it should work.  The argument presented in \cite{Vardi} proceeds by considering i.i.d. samples $X_1,\ldots,X_n$ from the distribution with density $f$.  Replacing the $f(x) \,\d x$ in \eqref{iter1} with $\d\widehat F(x)$, where $\widehat F$ is the empirical distribution based on $X_1,\ldots,X_n$, gives the algorithm
\begin{equation}
\label{iter2}
\widehat{p}_m(\theta)=\widehat{p}_{m-1}(\theta)\,\int_\XX \frac{k(x,\theta)}{f_{m-1}(x)}\,\d \widehat{F}(x).  
\end{equation}
It turns out that this is precisely an EM algorithm to compute the nonparametric maximum likelihood estimator of $p$, see e.g., \cite{Laird}.  This connection to likelihood--based estimation gives the algorithm \eqref{iter2} some justification for a fixed sample $X_1,\ldots,X_n$.  The dependence on a particular sample can be removed by allowing $n \to \infty$ and applying the law of large numbers to get convergence of \eqref{iter2} to \eqref{iter1}, hence motivation for the latter.  However, despite the identification of \eqref{iter2} as an EM algorithm, no formal convergence proof has been given that the iterates $\widehat p_m$ in \eqref{iter2} converge to the nonparametric maximum likelihood estimator, but see \cite{Chae}.  

The above argument giving intuitive support for algorithm \eqref{iter1} is not fully satisfactory because it does not give any indication that the algorithm will converge to a solution of \eqref{fred}.  For a more satisfactory argument, note that, {\em if the algorithm converges} to a limit $p_\infty$, then we must have 
\[ \int_\XX \frac{k(x, \theta)}{f_\infty(x)} \, f(x) \,\d x = 1 \quad \forall \; \theta \in \text{supp}(p_\infty), \]
where $f_\infty(x) = \int k(x,\theta) \, p_\infty(\theta) \,\d \theta$.  According to Lemma~2.1 in \cite{Patilea} or Lemma~2.3 in \cite{Kleijn}, the above condition implies that 
\begin{equation}
\label{projection}
D(f,f_\infty) = \inf_P D(f,f_P), 
\end{equation}
where $D(f,g) = \int \log(f/g) \, f \,\d x$ is the Kullback--Leibler divergence and the infimum is over all densities of the form $\int k(x,\theta) \, \d P(\theta)$ for some probability measure $P$ on $\Theta$.  So, if there exists a solution to \eqref{fred}, then the limit $p_\infty$ would have to be one of them.  Even if there is no solution to \eqref{fred}, $p_\infty$ will be such that the corresponding $f_\infty$ is closest to $f$ in the Kullback--Leibler sense.  To make this argument fully rigorous, we need to establish that algorithm \eqref{iter1} does indeed converge.

%The convergence of (\ref{iter1}) is argued by \cite{Vardi} as being a law of large numbers limit of an EM algorithm. For suppose $X_1,\ldots,X_n$ are an i.i.d. sample from $f$. Then an EM algorithm for estimating $p$, in fact the NPMLE, is given by
%\begin{equation}%\label{iter2}
%\widehat{p}_m(\theta)=\widehat{p}_{m-1}(\theta)\,\int_\XX \frac{k(x,\theta)}{f_{m-1}(x)}\,\d \widehat{F}(x),
%\end{equation}
%where $\widehat{F}$ is the empirical distribution function of the $(X_i)$. See also equation (2.12) in \cite{Vardi}. Despite this being an EM algorithm, proof of convergence is not a given. A convincing proof is in Chae et al. (2017; in preparation).  A strong law of large numbers would now indicate that (\ref{iter2}) $\to$ (\ref{iter1}) as $n\to\infty$, hence providing motivation for (\ref{iter1}).

\section{Convergence properties}

In Section~3.4 of \cite{Vardi}, the authors consider \eqref{fred}, but with some restrictions.  The first is that $p$ is a finite measure, i.e., point masses on a finite number of atoms, and the second is that the solution $p$ is piecewise constant.  Our arguments here do not require such restrictions. 

To assess the properties of \eqref{iter}, let $p$ be a solution to \eqref{fred}.  Multiply \eqref{iter1.log} by $p(\theta)$ throughout, and then integrate over $\theta$ to get 
%\begin{equation}\label{logequal}
\[ \int p(\theta)\,\log p_m(\theta)\,\d\theta=\int p(\theta)\,\log p_{m-1}(\theta)\,\d\theta+\int p(\theta) \log \int \frac{k(x,\theta)\,f(x)}{f_{m-1}(x)}\,\d x\,\d\theta. \]
%\end{equation}
By Jensen's inequality, the last term is lower bounded by $D(f, f_{m-1})$, the Kullback--Leibler divergence of $f_{m-1}$ from $f$.  Since $D$ is non--negative, we can deduce that 
$$D(p,p_{m})\leq D(p,p_{m-1})-D(f,f_{m-1}).$$
This implies that $D(p, p_m)$ is a non--negative and non--increasing sequence, hence has a limit, say, $c \geq 0$, which, in turn, implies $f_m \to f$ strongly in the $D$ or $L_1$ sense.  Therefore, in agreement with \cite{Landweber} for the additive algorithm \eqref{iter}, we have that $p_m$ is converging to a set $\mathbb{S}$ where $D(p,\widetilde{p})=c$ for all $\widetilde{p}\in \mathbb{S}$.  Of course, if $p$ is a unique solution to \eqref{fred} then $p_m \to p$ strongly.  A similar argument for convergence of \eqref{iter1} is presented in \cite{Shyamalkumar}, with some generalizations.  

On the other hand, if \eqref{fred} does not have a solution, then, as discussed above, we expect that $p_m$ in algorithm \eqref{iter1} will converge to a density with limit $p_\infty$ such that the corresponding mixture $f_\infty$ satisfies \eqref{projection}.  This can be proved by considering  \eqref{iter1} as an \textit{alternating minimization} procedure; see, for example, \cite{Csiszar75, Csiszar, Dykstra}.

In the following, define the joint densities on $\mathbb{X}\times\Theta$,
\begin{equation}
\label{pi_q_def}
q_m(x,\theta) = p_m(\theta) k(x,\theta)\quad \text{and} \quad \pi_m(x,\theta) = \frac{f(x) q_{m-1}(x,\theta)}{\int q_{m-1}(x,\theta') \d\theta'}.
\end{equation}

%\medskip

\begin{theorem} 
\label{thm:convergence}
For an initial solution $p_0 > 0$, let $(p_m)$ be obtained via \eqref{iter1}.
Assume there exists a sequence $(p^*_s)_{s\geq 1}$ of densities such that 
$$D(f, f^*_s) \rightarrow \inf_P D(f, f_P)$$ and $D(\pi^*_s, q_m) < \infty$ for some $m \geq 0$, where $f^*_s(x) = \int k(x,\theta) p^*_s(\theta) \d\theta$ and 
\[ \pi^*_s(x,\theta) = \frac{f(x) p^*_s(\theta) k(x,\theta)}{\int p^*_s(\theta') k(x,\theta') \d\theta'}. \]
Then, $D(f, f_m)$ decreases to $\inf_P D(f,f_P)$.
\end{theorem}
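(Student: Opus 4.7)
The plan is to view the iteration \eqref{iter1}--\eqref{pi_q_def} as an \emph{alternating minimization} of the Kullback--Leibler divergence $D(\pi,q)$ over two families of joint densities on $\mathbb{X}\times\Theta$. Let $\Pi$ denote the set of joint densities whose $x$-marginal equals $f$, and let $\mathcal{Q}$ denote the set of joint densities of the form $p(\theta)\,k(x,\theta)$ for some probability density $p$ on $\Theta$. A direct computation shows that, for fixed $q_{m-1}\in\mathcal{Q}$, the minimizer of $\pi\mapsto D(\pi,q_{m-1})$ over $\Pi$ is precisely $\pi_m$ in \eqref{pi_q_def}, with minimum value
\[
D(\pi_m,q_{m-1})\;=\;D(f,f_{m-1}),
\]
and, for fixed $\pi_m\in\Pi$, the minimizer of $q\mapsto D(\pi_m,q)$ over $\mathcal{Q}$ is exactly $q_m$ from \eqref{pi_q_def}, where $p_m$ is the $\theta$-marginal of $\pi_m$ and matches the update in \eqref{iter1}. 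Chaining these two $\arg\min$ properties yields the monotonicity
\[
D(f,f_{m+1})\;=\;D(\pi_{m+2},q_{m+1})\;\leq\;D(\pi_{m+1},q_{m+1})\;\leq\;D(\pi_{m+1},q_m)\;=\;D(f,f_m).
\]

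The heart of the argument is the inequality
\[
D(\pi^*,q_m)-D(\pi^*,q_{m+1})\;\geq\;D(f,f_m)-D(f,f^*),
\]
valid for any density $p^*$ with joint $\pi^*$ and mixture $f^*$. To prove it I use $q_{m+1}(x,\theta)/q_m(x,\theta)=p_{m+1}(\theta)/p_m(\theta)=\int k(y,\theta)f(y)/f_m(y)\,\d y$ to rewrite the left side as $\int p^*_\theta(\theta)\,\log\!\int k(y,\theta)f(y)/f_m(y)\,\d y\,\d\theta$, where $p^*_\theta$ is the $\theta$-marginal of $\pi^*$. Substituting the definition of $\pi^*$ lets me rewrite $k(y,\theta)f(y)/f_m(y)$ as $\pi^*(y\mid\theta)\,f^*(y)/f_m(y)$ times the factor $p^*_\theta(\theta)/p^*(\theta)$; a single application of Jensen's inequality to $\log\!\int \pi^*(y\mid\theta)\,f^*(y)/f_m(y)\,\d y$, followed by the marginal identity $\int \pi^*(x,\theta)\,\d\theta=f(x)$, produces the term $D(f,f_m)-D(f,f^*)$, together with a non-negative residual $D(p^*_\theta,p^*)$ that can simply be dropped.

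The conclusion then follows by a telescoping sum. The hypothesis $D(\pi^*_s,q_{m_0})<\infty$ and the key inequality inductively propagate finiteness of $D(\pi^*_s,q_m)$ to every $m\geq m_0$, and summing yields
\[
\sum_{m=m_0}^{M-1}\bigl[D(f,f_m)-D(f,f^*_s)\bigr]\;\leq\;D(\pi^*_s,q_{m_0})-D(\pi^*_s,q_M)\;\leq\;D(\pi^*_s,q_{m_0})\;<\;\infty.
\]
Since $D(f,f_m)$ is non-increasing, its limit $L$ exists, and finiteness of the series forces $L\leq D(f,f^*_s)$. Letting $s\to\infty$ and using $D(f,f^*_s)\to\inf_P D(f,f_P)$ gives $L\leq\inf_P D(f,f_P)$, and the reverse inequality is automatic from $f_m$ being a mixture.

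The main obstacle I anticipate is the key inequality in the second paragraph. The subtlety is that the $\theta$-marginal $p^*_\theta$ of $\pi^*$ is generally \emph{not} $p^*$ itself (they agree only at a stationary point), so the reparametrization in terms of $\pi^*(y\mid\theta)$ and the resulting $p^*_\theta(\theta)/p^*(\theta)$ factor must be handled carefully; happily, the discrepancy collapses into the non-negative Kullback--Leibler term $D(p^*_\theta,p^*)$, which can be discarded without losing the inequality we need.
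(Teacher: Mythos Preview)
Your proposal is correct and follows the same overall strategy as the paper: cast \eqref{iter1}--\eqref{pi_q_def} as alternating minimization of $D(\pi,q)$ over the two convex families $\Pi$ (joint densities with $x$-marginal $f$) and $\mathcal{Q}$ (joints of the form $p(\theta)k(x,\theta)$), verify that $\pi_m$ and $q_m$ are the respective minimizers, and then deduce $D(f,f_m)\searrow\inf_P D(f,f_P)$.

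The difference is in how the convergence of the alternating scheme is established. The paper, after checking that $\pi_m=\argmin_{\pi\in\Pi}D(\pi,q_{m-1})$ and $q_m=\argmin_{q\in\mathcal{Q}}D(\pi_m,q)$, simply invokes Theorem~3 of \cite{Csiszar} as a black box to obtain $D(\pi_m,q_{m-1})\searrow\inf_{\pi\in\mathcal{P}_0,\,q\in\mathcal{Q}}D(\pi,q)$, and then matches the infimum with $\inf_P D(f,f_P)$ via the hypothesis on $(\pi^*_s)$. You instead prove the needed convergence by hand: your ``key inequality''
\[
D(\pi^*,q_m)-D(\pi^*,q_{m+1})\;\geq\;D(f,f_m)-D(f,f^*)
\]
is exactly the \emph{four-points property} underlying the Csisz\'ar--Tusn\'ady theorem, and your telescoping argument is precisely how that theorem is proved. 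Your derivation via the identity $k(y,\theta)f(y)/f_m(y)=\bigl(p^*_\theta(\theta)/p^*(\theta)\bigr)\,\pi^*(y\mid\theta)\,f^*(y)/f_m(y)$, Jensen, and the marginal constraint is clean and correct, including the observation that the discrepancy between $p^*_\theta$ and $p^*$ contributes the non-negative term $D(p^*_\theta,p^*)$, which can be dropped. What your route buys is a fully self-contained proof that does not send the reader to \cite{Csiszar}; what the paper's route buys is brevity and a clear pointer to the general machinery. Substantively, the two arguments are the same.
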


%\medskip

\begin{proof}
Let $\mathcal{P}$ be the set of all bivariate densities $\pi$ on $\XX \times \Theta$ with $x$-marginal $f$, i.e., such that $\int \pi(x,\theta)\,\d\theta = f(x)$.  Similarly, let $\mathcal{Q}$ be the set of all bivariate densities $q$ on $\XX\times\Theta$ such that $q(x,\theta) = k(x,\theta)\, p(\theta)$ for some density $p$.
Note that $q_m$ and $\pi_m$ in \eqref{pi_q_def} satisfy $q_m \in \mathcal{Q}$ and $\pi_m \in \mathcal{P}$.

We first claim that $(q_m)$ and $(\pi_m)$ are obtained by the alternating minimization procedure of \cite{Csiszar} with the objective function $D(\pi, q)$, where $\pi$ and $q$ ranges over $\mathcal{P}$ and $\mathcal{Q}$, respectively.
To see this, note that
$D(\pi_m, q_{m-1}) = D(f, f_{m-1}) \leq D(\pi, q_{m-1})$
for every $\pi \in \mathcal{P}$; the inequality holding since $f$ and $f_{m-1}$ are marginal densities of $\pi$ and $q_{m-1}$, respectively.
It follows that $\pi_m = \argmin_{\pi\in\mathcal{P}} D(\pi, q_{m-1})$.
Also, note that
\begin{align*}
D(\pi_m, q) &= \iint \pi_m(x,\theta) \log \frac{\pi_m(x,\theta)}{q(x,\theta)} \,\d\theta \,\d x \\
& = C - \int \log p(\theta) \int \pi_m(x,\theta) \,\d x \,\d\theta \\
&= C - \int \log p(\theta) \int \frac{f(x) q_{m-1}(x,\theta)}{\int q_{m-1}(x,\theta') d\theta'}\, \d x \,\d\theta \\
&= C - \int \log p(\theta) \int \frac{k(x,\theta) p_{m-1}(\theta)}{\int k(x,\theta') p_{m-1}(\theta')\,\d\theta'} f(x) \,\d x \,\d\theta \\
&= C - \int \log p(\theta) \,\d P_m(\theta),
\end{align*}
where $C$ does not depend on $p$.  The last integral is maximized at $p_m$, so $q_m = \argmin_{q\in\mathcal{Q}} D(\pi_m,q)$. Since $\mathcal{P}$ and $\mathcal{Q}$ are convex, we have
\[ D(f, f_{m-1}) = D(\pi_m, q_{m-1}) \searrow \inf_{p\in\mathcal{P}_0,\,q\in\mathcal{Q}} D(\pi, q), \]
by Theorem~3 in \cite{Csiszar}, with $\mathcal{P}_0 = \{\pi\in\mathcal{P}: D(\pi, q_m) < \infty \;\textrm{for some $m$} \}$.  We have $D(\pi,q) = D(f, f_p)$, and and the assumption $D(\pi_s^*, q_m) < \infty$ for some $m$ implies that
$\inf_{p\in\mathcal{P}_0}\inf_{q\in\mathcal{Q}} D(\pi, q) \leq \inf_P D(f, f_P)$.  
Since $D(f, f_m) \geq \inf_P D(f, f_P)$, we conclude that $D(f, f_m) \searrow \inf_P D(f, f_P)$.
\end{proof}

%\medskip

Two remarks are in order.  First, the integrability condition $D(\pi^*_s, q_m) < \infty$ for some $m \geq 0$ is not especially strong.  For example, assume there exists $P^*$ which minimizes $D(f, f_P)$ and has a density $p^*$.  Such a minimizer exists under a mild identifiability condition; see Lemma 3.1 of \cite{Kleijn}.  Then $D(\pi^*, q_m) < \infty$ for some $m$ implies the required integrability condition, where
$$
	\pi^*(x,\theta) = \frac{f(x) p^*(\theta) k(x,\theta)}{\int p^*(\theta') k(x,\theta') \d\theta'}.
$$
Second, if $\Theta$ is compact and the minimizer $P^*$ is unique, every subsequence of $(p_m)$ has a further subsequence weakly converging to $P^*$, so $(p_m)$ converges to $P^*$.

It is also worth noting that the monotonicity property of $D_m=D(f,f_m)$ from the theorem can be used to define a stopping rule.  For example, one could terminate algorithm \eqref{iter1} when $D_m$ itself and/or the difference $D_m-D_{m-1}$ falls below a certain user--specified tolerance.

\section{Smooth mixing density estimation} 

Suppose we observe $X_1,\ldots,X_n$ as i.i.d. from a distribution with density $f$ as in \eqref{fred}, and we wish to estimate a smooth version of $p$.  The nonparametric maximum likelihood estimator of $p$ is known to be discrete, so is not fully satisfactory for our purposes.  One idea for a smooth estimate of $p$ is to maximize a penalized likelihood.  The proposal in \cite{Liu} is to define a penalized log--likelihood function 
\[ \ell(\eta) = n^{-1} \sum_{i=1}^n \log \int k(X_i, \theta) e^{\eta(\theta)} \,\d\theta - \log \int e^{\eta(\theta)} \, \d\theta - \lambda\,\int [\eta''(\theta)]^2\,\d\theta, \]
%$$\ell(\eta)=n^{-1}\sum_{i=1}^n [\log k(x_i,\theta_i)+\eta(\theta_i)]-\log\int e^{\eta(\theta)}\,\d\theta-\lambda\,\int [\eta''(\theta)]^2\,\d\theta,$$
where $\lambda>0$ is a smoothing parameter, and $\eta$ determines the mixing density 
$$p(\theta)=\frac{e^{\eta(\theta)}}{\int e^{\eta(\theta)}\,\d\theta}.$$
The right-most integral in the expression for $\ell(\eta)$ measures the curvature of $\eta$, so maximizing $\ell$ will encourage solutions which are ``less curved,'' i.e., more smooth.  An EM algorithm is available to produce an estimate of $\eta$ and, hence, of $p$.  However, each iteration requires solving a non--trivial functional differential equation. 

Here we propose a more direct approach, namely, to use algorithm \eqref{iter1} with the true density $f$ replaced by, say, a kernel density estimate of $f$.  For some $h>0$, define the kernel estimate 
$$f^h(x)=\frac{1}{nh} \sum_{i=1}^n \phi\left(\frac{x-X_i}{h}\right) $$
where $\phi(x)=\exp(-\half x^2)/\sqrt{2\pi}$ is the standard normal density function. The bandwidth $h$ can be selected in a variety of ways.  One option is the traditional approach (\cite{Sheather}) of minimizing asymptotic mean integrated square error, which gives $h$ of order $n^{-1/5}$.  Another idea is to get the nonparametric maximum likelihood estimator $\widehat P$ of $P$, define the corresponding mixture 
$$\widehat{f}(x)=\int k(x,\theta)\,\d\widehat{P}(\theta)$$
and then choose the bandwidth $\widehat{h}=\arg \min d(f^h,\widehat{f})$, where $d$ is, say, the $L_1$ distance.  Finally, our proposal is to estimate $p$ by running algorithm \eqref{iter1} with $f$ replaced by $f^{\hat h}$.  Since $f^{\hat h}$ is smooth, so too will be our estimate of $p$.  

As a quick real--data illustration, consider the well--known galaxy data set, available in the MASS package in R (\cite{R}).  We estimate the mixture density $f$ with the Gaussian kernel method described above, using the default settings in {\tt density}.  Then we estimate the mixing density via the procedure just described above.  In this case, we used 25 iterations of \eqref{iter1} and the results are displayed in Figure~\ref{fig:galaxy}.  Panel~(a) shows $p_{25}$ from \eqref{iter1}, and Panel~(b) shows the data histogram, the kernel density estimator (solid), and the mixture corresponding to $p_{25}$.  Both densities in Panel~(b) fit the data well, and the fact that the two are virtually indistinguishable suggests that the density $p$ in Panel~(a) is indeed a solution to the mixture equation.   

\begin{figure}
\begin{center}
\subfigure[Mixing density]{\scalebox{0.5}{\includegraphics{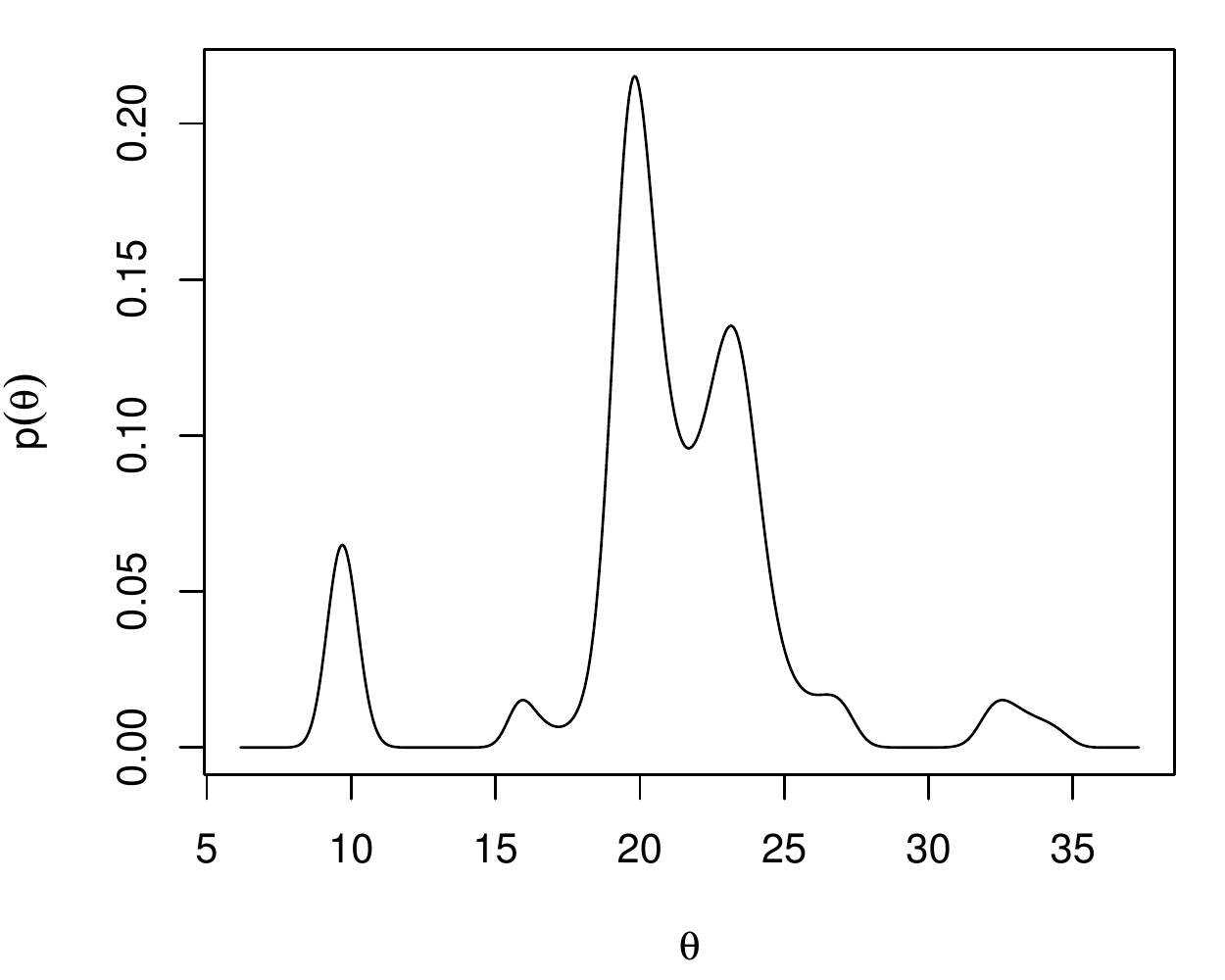}}}
\subfigure[Data and mixtures]{\scalebox{0.5}{\includegraphics{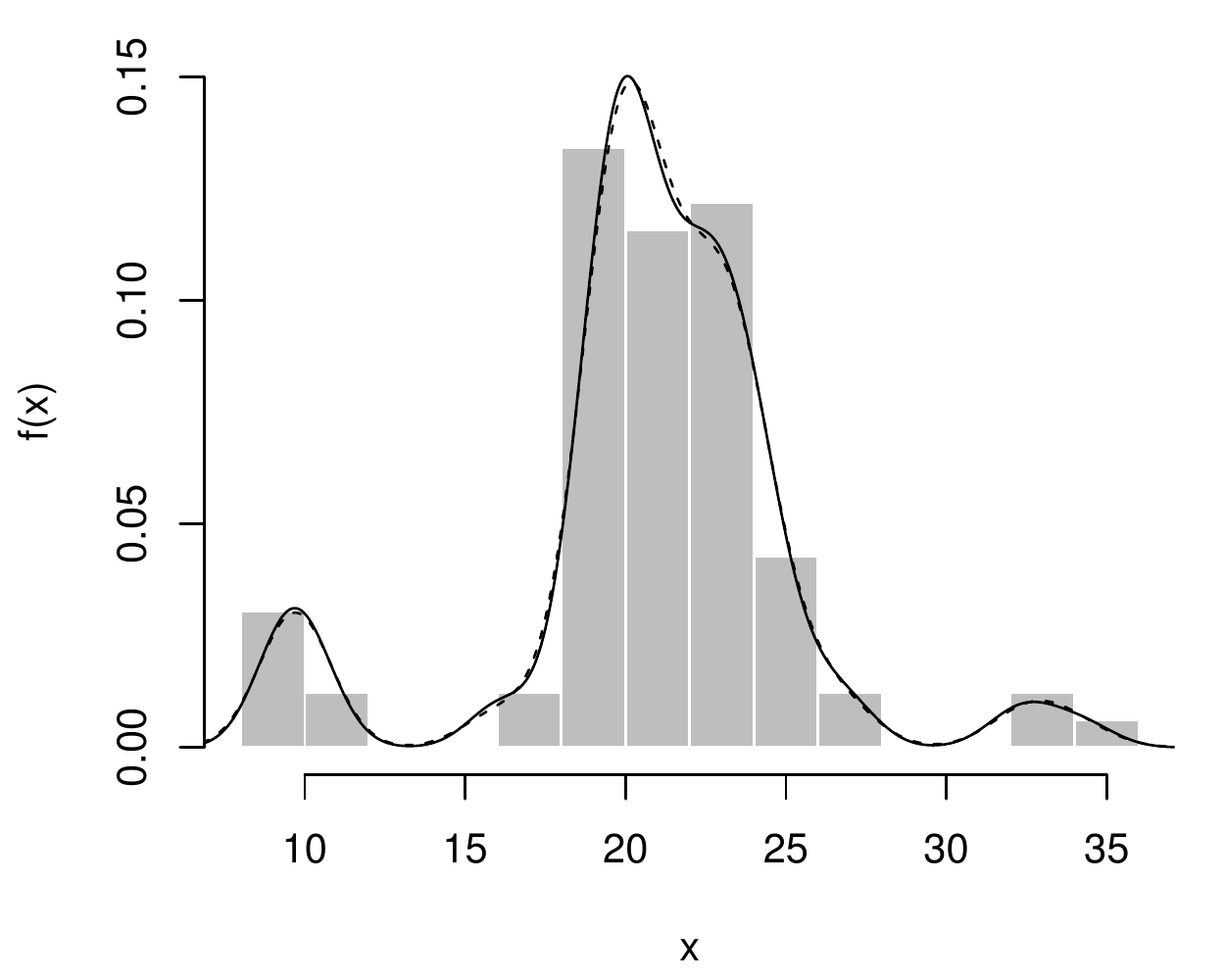}}}
\caption{Mixing and mixture density estimates based on algorithm \eqref{iter1} with a kernel density plug-in for the galaxy data.  In Panel~(b), solid line is the kernel estimate and dashed is the mixture corresponding to the mixing density in Panel~(b).}
\label{fig:galaxy}
\end{center}
\end{figure}

Next, for further investigation into the performance of the proposed mixing density estimation procedure, we present some examples with simulated data.  We start by considering a deconvolution problem, where $k(\cdot,\theta)$ is the normal density with mean $\theta$ and standard deviation $\sigma=0.05$.
Two mixing densities on the unit interval [0,1] are illustrated;
\begin{eqnarray*}
	p_1(\theta) &\propto& \theta^4 (1-\theta)^4
	\\
	p_2(\theta) &\propto& \phi \left( \frac{\theta-0.3}{0.1} \right) + 2 \phi \left( \frac{\theta-0.7}{0.1} \right).
\end{eqnarray*}
With a sample of size $n=300$, we first obtained a kernel density estimator $f^h$, using the Gaussian kernel, with bandwidth as in \cite{Sheather}, and then ran \eqref{iter1}.  
%We ran (\ref{iter1}), and the integral within is evaluated via the Gauss--Legendre quadrature rule with 300 quadrature points. 
As suggested above, monotonicity of $D_m=D(f^h,f_m)$ suggests a stopping rule, and we terminated the algorithm when $D_m-D_{m-1} < 10^{-5}$.  The number of iterations used were $m=8$ for $p_1$ and $m=14$ for $p_2$.  The estimates of the mixing and corresponding mixture densities are depicted in Figure~\ref{fig2}.  As can be seen in the right figures, the $f_m$ and $\widehat{f}$ are indistinguishable; that is, $D_m$ are effectively zero after a few iterations.

Next, we consider scale mixtures of normal distributions where $k(\cdot, \theta)$ is the centered normal density with variance $\theta$.  For the mixing density, two well--known distributions on the positive real line, the inverse-gamma and gamma densities, are considered, i.e.,
\[ p_1(\theta) \propto \theta^{-3} e^{-1/\theta} \quad \text{and} \quad p_2(\theta) \propto  e^{-5\theta}. \]
With a sample of size $n=300$, the estimator of the mixing density is obtained as previously.  The number of iterations are $m=57$ for $p_1$ and $m=78$ for $p_2$, and   the estimates of $p$ and $f$ are illustrated in Figure~\ref{fig3}.
Although the density estimates, $f^h$ and $f_m$, are close to the true $f$, there are deviations between $p$ and $p_m$, in particular for the inverse--gamma case. This is mainly due to the ill--posedness of the problem.

\begin{figure}[!htbp]
\begin{center}
\includegraphics[scale=1]{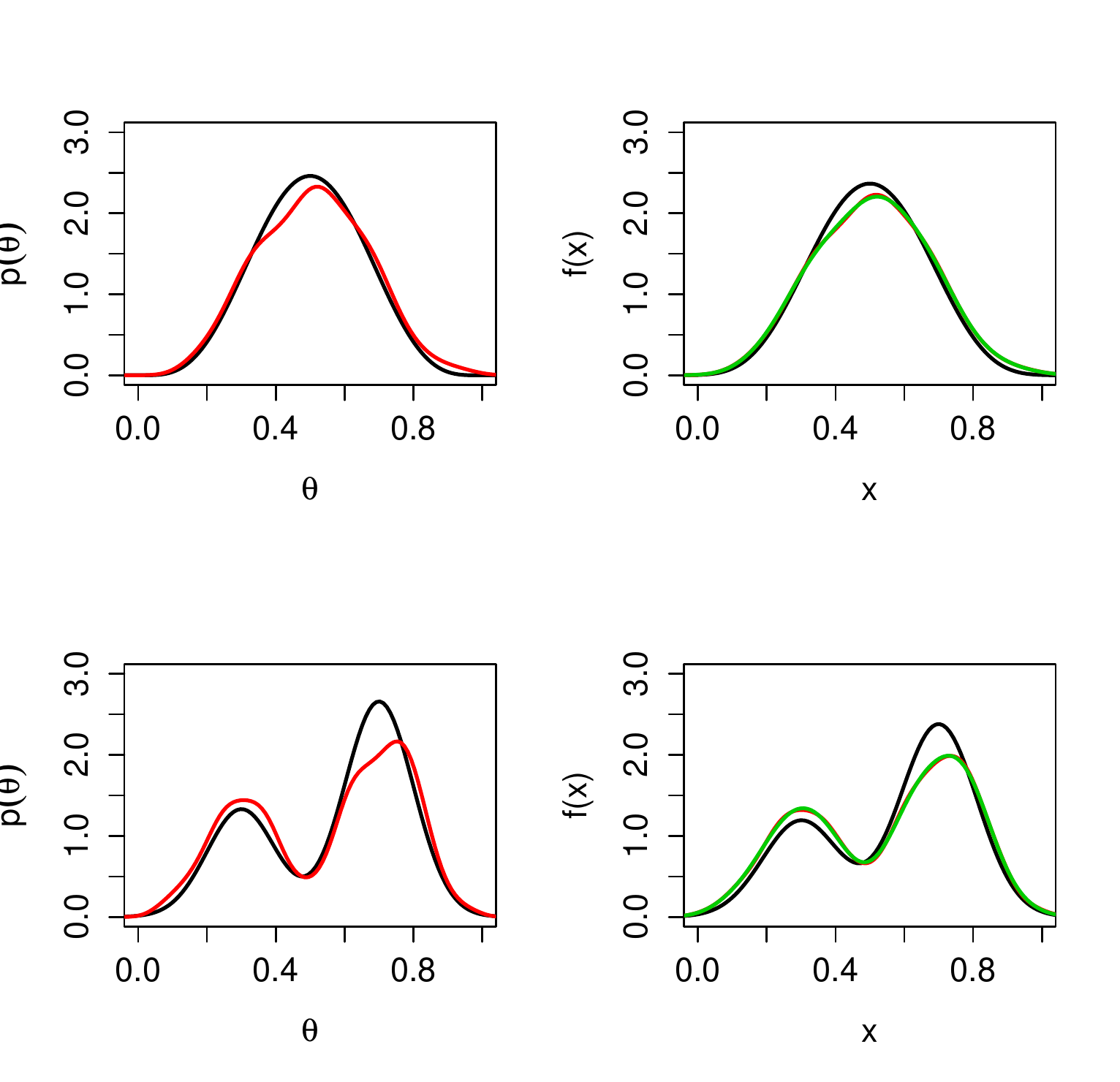}
\caption{Estimate of mixing density with location mixtures of normals. Left plots are the true (black) and estimated (red) mixing density. Right plots are the true (black), kernel estimator (red) and the one obtained by smooth NPMLE (green).}
\label{fig2}
\end{center}
\end{figure}

\begin{figure}[!htbp]
\begin{center}
\includegraphics[scale=1]{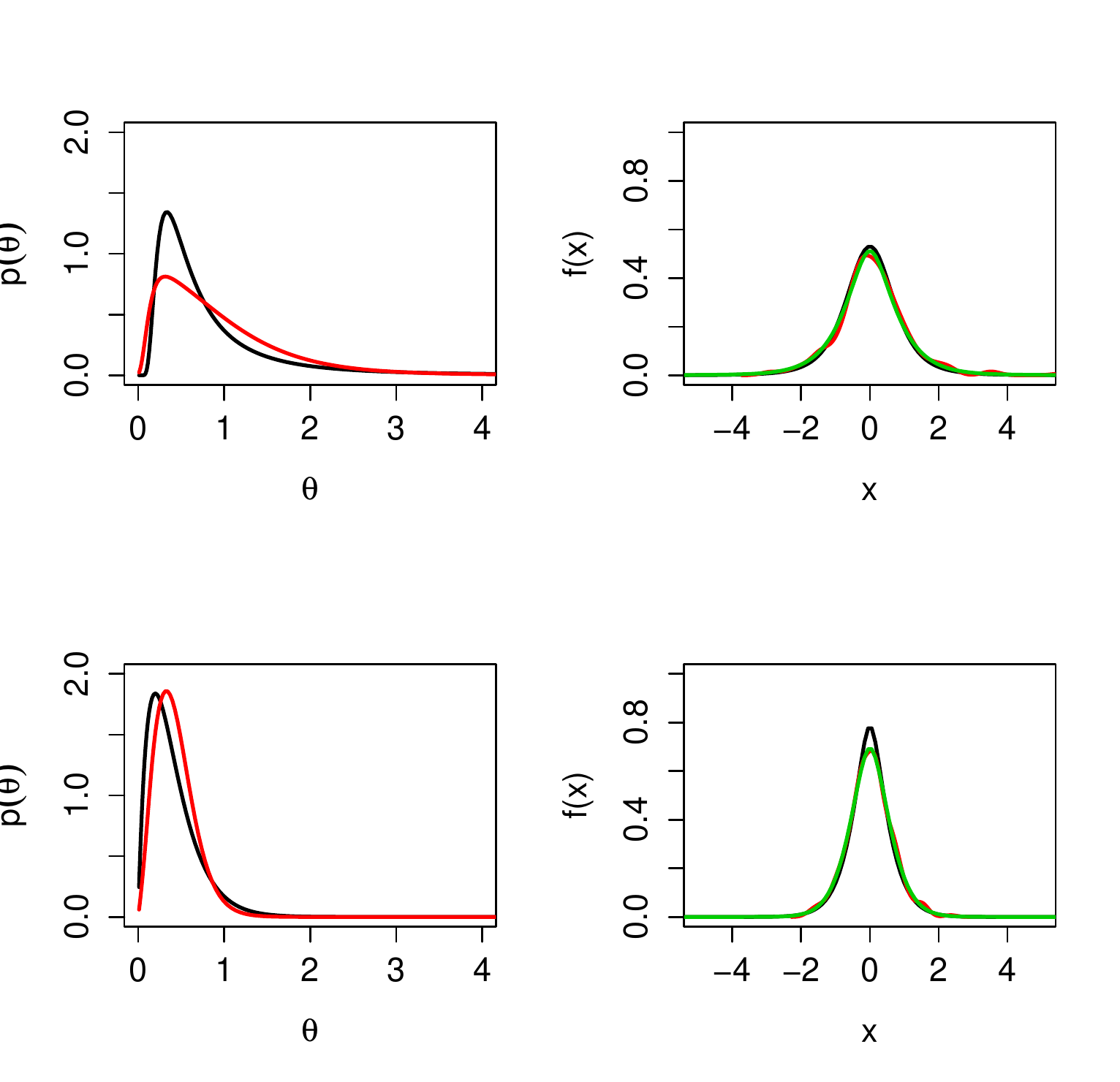}
\caption{Estimate of mixing density with scale mixtures of normals. Left plots are the true (black) and estimated (red) mixing density. Right plots are the true (black), kernel estimator (red) and the one obtained by smooth NPMLE (green).}
\label{fig3}
\end{center}
\end{figure}

\section{First passage time for Brownian motion} 

The second example is computing the stopping time density for Brownian motion hitting a boundary function.  For example, since \cite{Breiman} there has been substantial interest in the first passage time distribution of Brownian motion passing a square root boundary.  To set the scene, denote $(B(t))_{t\geq 0}$ as standard Brownian motion started at 0, let $h(t):(0,\infty)\to\RR$ be a continuous function with $h(0)>0$, the boundary function, and define
$$\tau=\inf\{t>0: B(t)\geq h(t)\}.$$
Then \cite{Peskir} provides an equation satisfied by the density $p$ of $\tau$, in particular, 
$$1-\Phi\left(\frac{h(t)}{\sqrt{t}}\right)=F(t)-\int_0^t \Phi\left(\frac{h(t)-h(s)}{\sqrt{t-s}}\right)\,\d P( s),$$
where $\Phi$ and $P$ are the probability measures/cumulative distribution functions corresponding to $\phi$ and $p$, respectively. This leads to a Volterra equation and \cite{Peskir} demonstrates solutions when $h$ is constant and linear. When $h$ is increasing and concave and $h(t)\leq h(0)+b\sqrt{t}$ then \cite{Peskir} proves that
$$P(t)=m(t)+\int_0^t R(t,s)\,m(s)\,\d s$$
where 
$$m(t)=2-2\Phi\left(\frac{h(t)}{\sqrt{t}}\right)\quad\mbox{and}\quad R(t,s)=\sum_{m=1}^\infty K_m(t,s), $$
with 
$$K_m(t,s)=\int_{s}^t K_1(t,r)\,K_{m-1}(r,s)\,\d r, $$
and
$$K_1(t,s)=\frac{1}{\sqrt{t-s}}\,\phi\left(\frac{h(t)-h(s)}{\sqrt{t-s}}\right)\,\left(2h'(s)-\frac{h(t)-h(s)}{t-s}\right).$$
Clearly $h(t)=a+b\sqrt{t}$ meets the requirements for this result. 

On the other hand, we can solve using (\ref{iter1}) after setting up a suitable Fredholm equation for $h(t)$ increasing and bounded by $a+b\sqrt{t}$, but not necessarily concave. Define, for any $x>0$, the martingale
$$X_t=\exp\left\{x\,B(t)-\half\,x^2 t\right\}.$$
With $\theta=T_{a,b}=\inf\{t>0: B(t)\geq a+b\,h(t)\}$ with $a,b>0$, $h(0)=0$ and $h(t)\leq \sqrt{t}$, we have that $X_{t\wedge T_{a,b}}$ is a martingale and, from the optional stopping theorem, that
$E(X_{t\wedge T_{a,b}})=E(X_0)=1$.

Now it is easy to show that $X_{t\wedge T_{a,b}}\leq \exp\{x a+\half b^2h^2(t)/t\}$, which is bounded and, hence, from the dominated convergence theorem, 
\[ E(X_{T_{a,b}})=\lim_{t\to\infty} E(X_{t\wedge T_{a,b}})=E(X_0)=1. \]
Effectively, we are also using the result that $T_{a,b}<\infty$ almost surely. Hence,
$$\int \exp\left\{x a+x bh(\theta)-\half\,x^2\theta\right\}\,p(\theta)\,\d\theta=1$$
which after some algebra yields the Fredholm equation
\begin{equation}\label{sqrt}
\int_0^\infty k(x, \theta)\,\widetilde{p}(\theta)\,\d\theta=a\,e^{-x a}=f(x),
\end{equation}
where $k(x, \theta)$ is the normal density with mean $b\,h(\theta)/\theta$ and variance $1/\theta$, constrained on $\RR_+$, and
$$a\,\sqrt{2\pi}\,e^{\half b^2h^2(\theta)/\theta}\,\Psi(-bh(\theta)/\sqrt{\theta})\,p(\theta)=\sqrt{\theta}\,\widetilde{p}(\theta)$$
and $\Psi(\cdot)=1-\Phi(\cdot)$. Now \eqref{sqrt} can be readily solved by algorithm \eqref{iter1}.

The approach here is closely related to work appearing in \cite{Valov}. This considers first passage times which are almost surely finite and of the type $\tau=\inf\{t>0: B(t)\leq b_\alpha(t)=b(t)+\alpha\,t\}$, where $b_\alpha(t)>-c$ for some $c>0$. While \cite{Valov} constructs a Fredholm equation using Girsanov's theorem, as we do, this is quite different and each equation depends on the type of boundary, whereas ours is always based on the half--normal and exponential distributions.  

Here we demonstrate the algorithm for solving the Fredholm equation for the hitting time density with a square root boundary; specifically
$a+b\sqrt{t}$ with $a=1$ and $b=0.1$. We start with a grid of points $\theta_j=hj$, where $h=0.05$ and $j=1,\ldots,1000$. The starting density is $p_0(\theta)=0.01\,e^{-0.01\theta}$ and at each iteration we evaluate $p_m(\theta_j)$. Due to the ease of sampling from an exponential density, we evaluate
$$\int_0^\infty \frac{k(x, \theta_j)\,f(x)}{f_{m}(x)}\,\d x$$
using Monte Carlo methods, i.e., 
$$N^{-1}\sum_{i=1}^N \frac{k(X_i, \theta_j)}{f_{m}(X_i)}$$
where the $(X_i)$ are iid from the exponential distribution with mean $1/a$, and we take $N=5000$.  The $f_m(X_i)$ is evaluated using the trapezoidal rule using the $\widetilde{p}_m(\theta_j)$ values, so
$$f_m(X_i)\approx \frac{h}{2} \sum_{j=1}^M \left[k(X_i, \theta_{j-1})\,\widetilde{p}_m(\theta_{j-1})+k(X_i, \theta_{j})\,\widetilde{p}_m(\theta_{j})\right],$$
where $\theta_0=0$. Consequently, we have numerically, 
$$\widetilde{p}_{m+1}(\theta_j)=N^{-1}\sum_{i=1}^N \frac{2\,k(X_i, \theta_j)\,\widetilde{p}_m(\theta_j)}{h \sum_{j=1}^M \left[k(X_i, \theta_{j-1})\,\widetilde{p}_m(\theta_{j-1})+k(X_i, \theta_{j})\,\widetilde{p}_m(\theta_{j})\right]}.$$
The estimated $p(\theta)$, obtained by transforming $\widetilde{p}(\theta)$, is given in Figure~\ref{fig1}, based on 200 iterations of algorithm \eqref{iter1}.

\begin{center}
\begin{figure}[t]
\begin{center}
\includegraphics[scale=0.45]{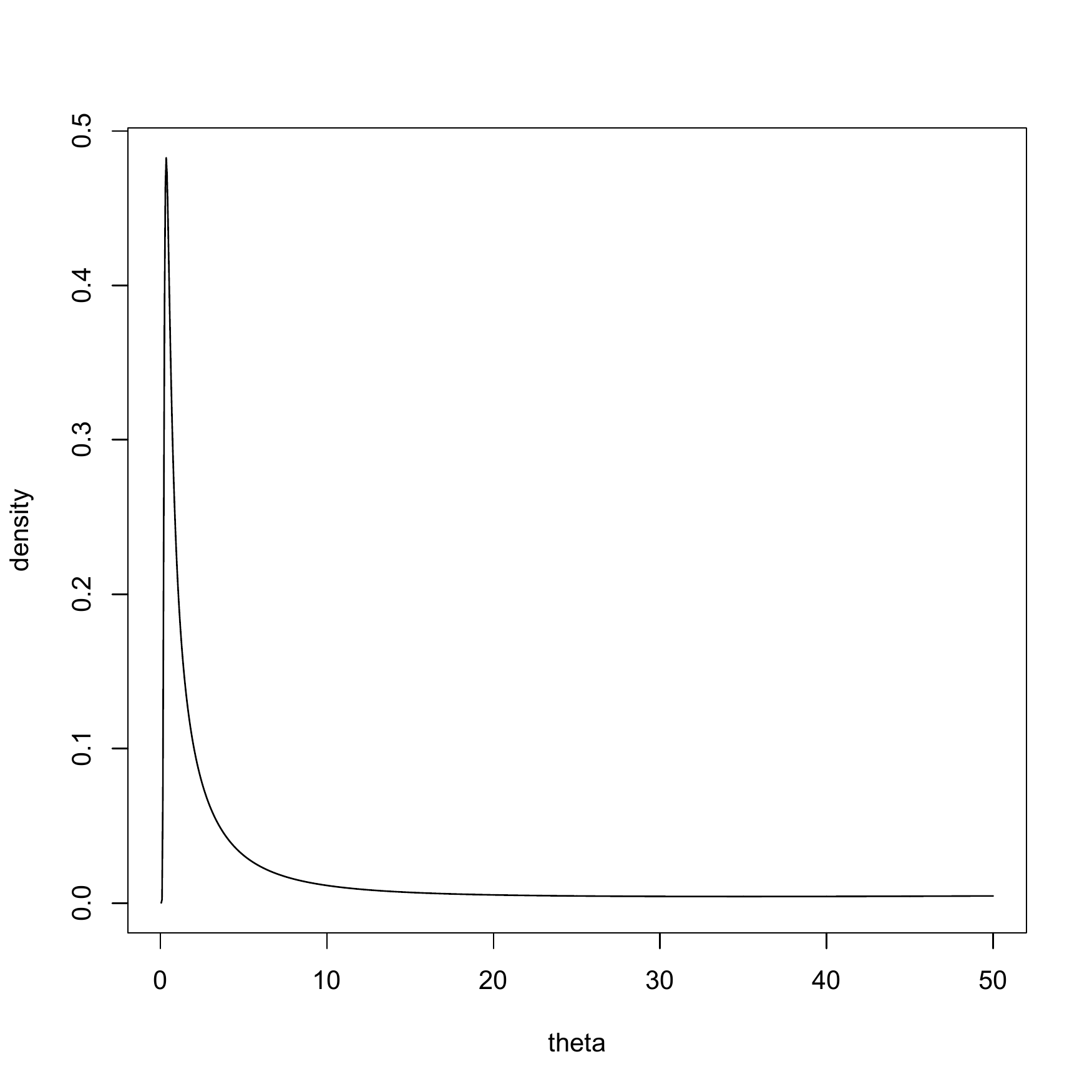}
\caption{Estimate of hitting time density for square root boundary}
\label{fig1}
\end{center}
\end{figure}
\end{center}

\section{General Fredholm equation}

Algorithm \eqref{iter1} can be applied to solve general Fredholm equations of the first kind, where $f, p$ and $k(\cdot,\theta)$ are not necessarily probability density functions.  Assume first that $f, p$ and $k(\cdot,\theta)$ in \eqref{fred} are non-negative functions, but not necessarily probability densities.  Then equation \eqref{fred} can be rewritten as
$$
f(x)=\int_\Theta \widetilde k(x,\theta)\, q(\theta)\,\d\theta,
$$
where $\widetilde k(x,\theta) = k(x,\theta) / \int k(x,\theta) \d x$ and $q(\theta) = p(\theta) \int k(x,\theta) \d x$.
Thus, the update (\ref{iter1}) can be replaced with
\begin{equation} \label{iter3}
p_m(\theta)=\frac{p_{m-1}(\theta)}{\int_{\mathbb{X}} k(x,\theta)\, \d x}\, \int_{\mathbb{X}}\frac{k(x,\theta)\,f(x)}{f_{m-1}(x)}\,\d x,
\end{equation}
which was also considered in \cite{Vardi}. Here it is assumed that $\sup_x \int k(x,\theta)\d\theta$ and $\sup_\theta\int k(x,\theta)\,\d x$ are both finite.

Next, assume that the solution $p$ and $f$ may not be necessarily non-negative functions.  In this case, instead of the original equation (\ref{fred}), we solve an equivalent non-negative equation
\begin{equation} \label{fred2}
\widetilde f(x)=\int_\Theta k(x,\theta)\,\widetilde p(\theta)\,\d\theta,
\end{equation}
where $\widetilde p(\theta) = p(\theta) + t$, $\widetilde f(x) = f(x) + t\int_\Theta k(x,\theta) \,\d\theta$, and $t > 0$ is a constant to be specified.
Since $k$ is non-negative, so are both $\widetilde f$ and $\widetilde p$ for a sufficiently large $t$.  As illustrated below, the value of $t$ rarely affects the convergence rate in practice.  Therefore, $t$ can be chosen as a very large number.

Finally, assume that $k$ is not necessarily non-negative, and write $k = k^+ - k^-$, where $k^+, k^- \geq 0$.  This case can also be solved by transforming the original equation to a non-negative one.  For convenience, assume that $\Theta = [0,1]$, then the original equation (\ref{fred}) can be written as
$$
	f(x)=\int_0^1 k^+(x,\theta) p(\theta) \d\theta - \int_0^1 k^-(x,\theta) p(\theta) \d\theta.
$$
Note that
\begin{equation} \label{aux-eq}
	\int_0^1 k^-(x,\theta) p(\theta) \d\theta - \int_1^2 k^-(x,\theta-1) p(\theta-1) \d\theta = 0,
\end{equation}
and therefore, by adding the last two display equations, we have
$$
   f(x)=\int_0^1 k^+(x,\theta) p(\theta) \d\theta  + \int_1^2 k^-(x,\theta-1) \{-p(\theta-1)\}\, \d\theta.
$$
Let
\begin{eqnarray*}
	\widetilde k(x,\theta) = \left\{ \begin{array}{ll}
		k^+(x,\theta) & \textrm{if $\theta \in [0,1]$}
		\\
		k^-(x,\theta-1) & \textrm{if $\theta \in (1,2]$}
	\end{array} \right.
\end{eqnarray*}
and 
\begin{eqnarray*}
	\widetilde p(\theta) = \left\{ \begin{array}{ll}
		p(\theta) & \textrm{if $\theta \in [0,1]$}
		\\
		-p(\theta-1) & \textrm{if $\theta \in (1,2]$},
	\end{array} \right.
\end{eqnarray*}
then we have the  Fredholm equation 
\begin{equation} \label{fred3}
	f(x)=\int_0^2 \widetilde k(x,\theta) \widetilde p(\theta) \d\theta
\end{equation}
with a non-negative kernel, which can be solved as previously described.

Note that $\widetilde k$ and $\widetilde p$ may have discontinuities; but this does not cause any problem once (\ref{fred3}) has the unique solution.  If there is another solution to (\ref{fred3}), say $\overline p$, and (\ref{aux-eq}) is not satisfied, the restriction of $\overline p$ on [0,1] may not be a solution of the original equation.  In this case we may apply another decomposition of $k$. In many examples, however, the simple approach (\ref{fred3}) works well.

For illustration, consider the equation (\ref{fred}), where $k(\cdot,\theta)$ is the normal density with mean $\theta$ and standard deviation $\sigma=0.05$, and $p$ has both positive and negative components.  In particular, we consider two examples
\begin{eqnarray*}
	p_1(\theta) &=& b_{2,5}(\theta) - b_{4,1}(\theta)
	\\
	p_2(\theta) &=& b_{10,1}(\theta) - b_{1,10}(\theta),
\end{eqnarray*}
where $b_{a,b}(\cdot)$ is the density of Beta$(a,b)$ distribution.  In both cases, a transformed equation (\ref{fred2}) is solved with $t=50$ and the results are illustrated in Figure~\ref{fig4}.  It can be easily seen that the true solution and $p_m$ are nearly the same.  Different values of $t$ ($5 \times 10^k$ with $k\leq 4$) have been tried, and in any case, the algorithm has been stopped in 10 iterations yielding nearly the same solution.

\begin{figure}[!htbp]
\begin{center}
\includegraphics[scale=1]{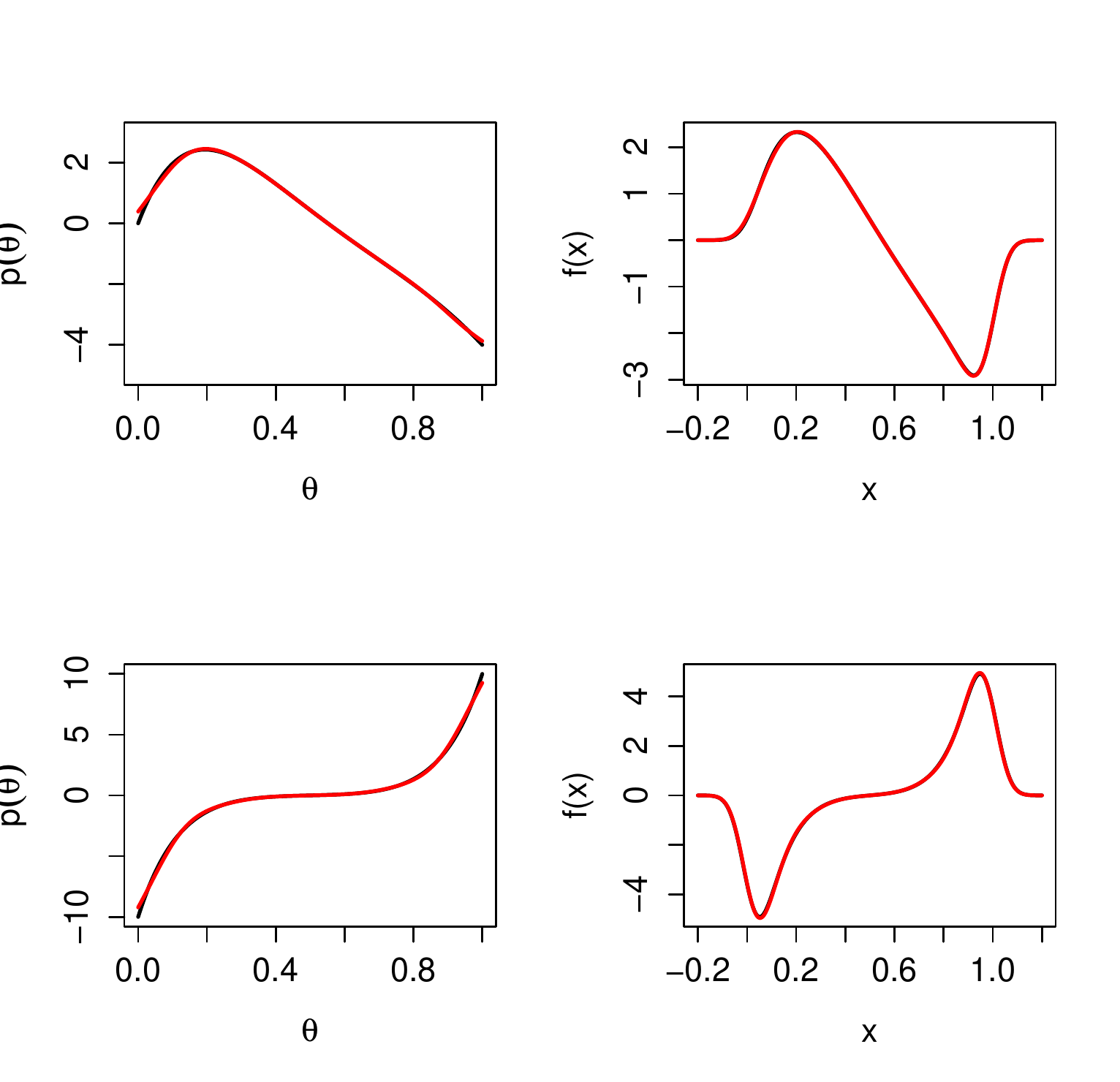}
\caption{Illustration with a positive kernel $k(x,\theta) = \phi_\sigma(x-\theta)$ and general solutions $p_1(\cdot)$ (top) and $p_2(\cdot)$ (bottom).}
\label{fig4}
\end{center}
\end{figure}

\begin{figure}[!htbp]
\begin{center}
\includegraphics[scale=1]{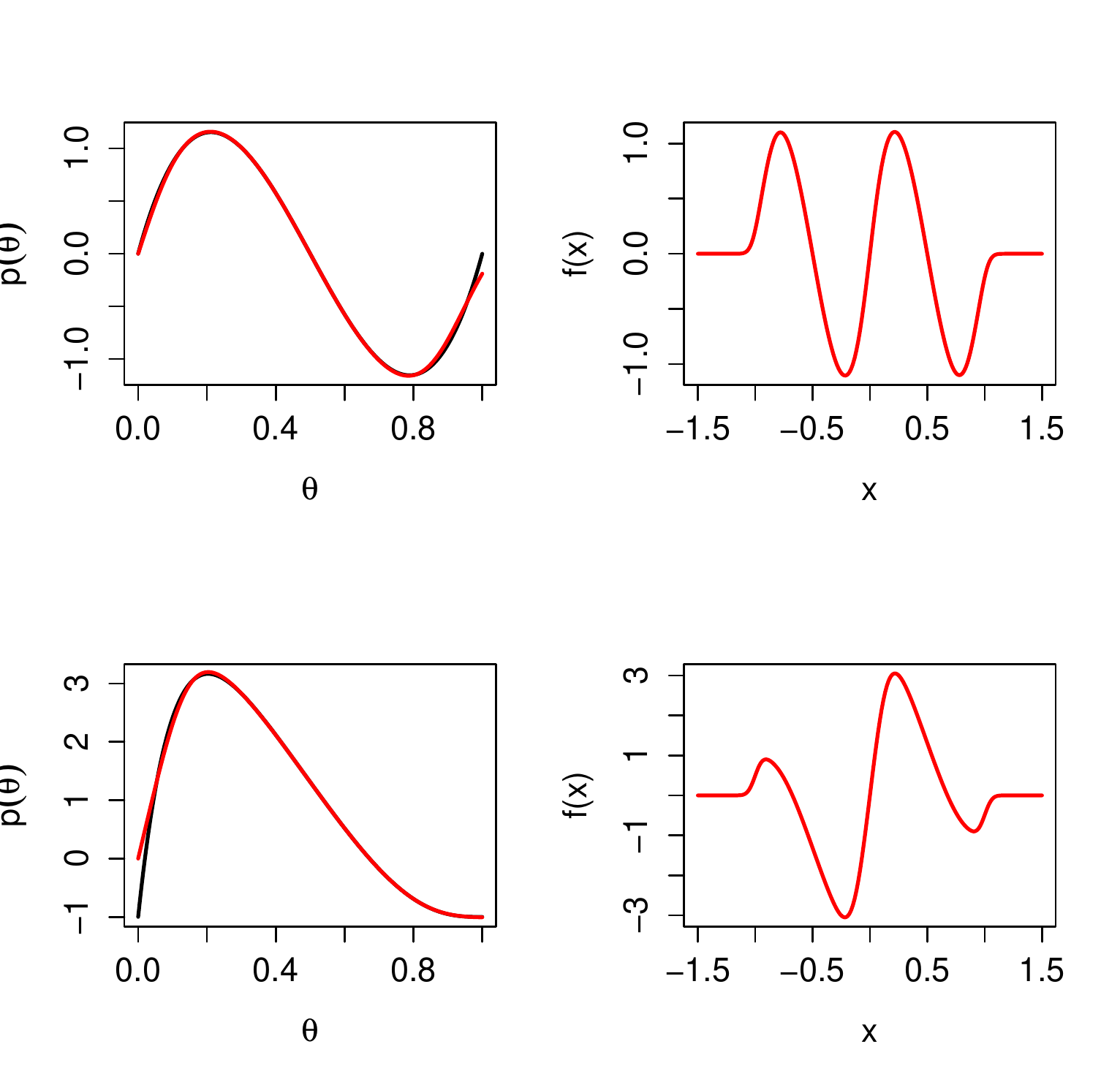}
\caption{Illustration with a general kernel $k(x,\theta) = \phi_\sigma(x-\theta) - \phi_\sigma(x+\theta)$ for $p_1(\cdot)$ (top) and $p_2(\cdot)$ (bottom).}
\label{fig5}
\end{center}
\end{figure}

Next, we consider a general kernel $k(x,\theta) = \phi_\sigma(x-\theta) - \phi_\sigma(x+\theta)$ with $\sigma=0.05$ and
\begin{eqnarray*}
	p_1(\theta) &=& b_{2,3}(\theta) - b_{3,2}(\theta)
	\\
	p_2(\theta) &=& b_{2,7}(\theta) + b_{3,4}(\theta) - 1,
\end{eqnarray*}
where $\phi_\sigma(x) = \phi(x/\sigma)/\sigma$.  The equations are solved by setting $k^+(x,\theta) = \phi_\sigma(x-\theta)$, $k^-(x,\theta) = \phi_\sigma(x+\theta)$ and $t=50$.  For both examples, the algorithm stopped in 5 iterations.  Results are illustrated in Figure~\ref{fig5}.

\section{Conclusion}

In this paper, we focused on an algorithm for solving Fredholm integral equations of the first kind, its properties, and some applications.  For the mixing density estimation application described in Section~4, we did not address the question of whether the estimate based on plugging in a kernel density estimator for $f$ in \eqref{iter1} would be consistent in the statistical estimation sense.  The {\em predictive recursion} method of \cite{Newton} can also quickly produce a smooth estimator of the mixing density, and it was shown in \cite{Tokdar} and \cite{Martin} that the estimator is consistent, but non-standard arguments are needed because of its dependence on the data order.  We are optimistic that the estimator described in Section~4, through the simple formula \eqref{iter1} for the updates, and the well-known behavior of the kernel density estimator, can have even stronger convergence properties than those demonstrated for predictive recursion.  

In Section 5 we were able to extend the class of boundary function for which the hitting time density can be solved using a novel Fredholm equation. Future work would consider upper and lower boundaries. Section 6 we were able to extend the basic algorithm, which was set up for density functions, to non--negative functions.

\bigskip
\begin{center}
{\large\bf SUPPLEMENTAL MATERIALS}
\end{center}

\begin{description}

\item[Rcpp code:] It contains code to perform the iterative algorithm proposed in this paper and numerical experiments.

\end{description}

\bibliographystyle{apalike}
\bibliography{bibliography}

\end{document}